\newcommand{\auth}[0]{Tobias Fritz and Paolo Perrone}
\newcommand{\tit}[0]{Monads, partial evaluations, and rewriting}
\newcommand{\kw}[0]{Monads, bar construction, rewriting, higher dimensional rewriting, simplicial sets.}
\numberwithin{equation}{section}
\theoremstyle{plain}
\newtheorem{thm}{Theorem}[section]
\newtheorem{prop}[thm]{Proposition}
\newtheorem{cor}[thm]{Corollary}
\newtheorem{deph}[thm]{Definition}
\newtheorem{prob}[thm]{Problem}
\theoremstyle{deph}
\newtheorem{eg}[thm]{Example}
\newcommand{\N}{\mathbb{N}}
\newcommand{\R}{\mathbb{R}}
\newcommand{\cat}[1]{{\mathsf{#1}}} % font for categories
\newcommand{\ar}[2][]{\arrow{#2}{#1}}
\newcommand{\id}{\mathrm{id}} % identity
\mathchardef\hy="2D
\let\originalleft\left
\let\originalright\right
\renewcommand{\left}{\mathopen{}\mathclose\bgroup\originalleft}
\renewcommand{\right}{\aftergroup\egroup\originalright}
\tikzset{% 
    bullet/.style={
       fill=black,
       circle,
       minimum width=1pt,
       inner sep=1pt
     },
     relation/.style={
       -,
       thick,
       shorten <=2pt,
       shorten >=2pt
     },
     function/.style={
       ->,
       thick,
       shorten <=2pt,
       shorten >=2pt
     },
     every fit/.style={
       ellipse,
       draw,
       inner sep=0pt
     }
}
\title{\vspace{-1cm}\tit}
\author[1]{Tobias Fritz\footnote{Correspondence: tfritz [at] perimeterinstitute.ca}}
\author[2]{Paolo Perrone\footnote{Correspondence: pperrone [at] mit.edu}}
\affil[1]{\small Perimeter Institute for Theoretical Physics, Waterloo, ON, Canada}
\affil[2]{\small Massachusetts Institute of Technology, Cambridge, MA, U.S.A.}
\date{MFPS 2020}
\begin{document}

\maketitle

%\vspace{-1cm}

\begin{abstract}
\addcontentsline{toc}{section}{Abstract}
%%%%%%%%%

Monads can be interpreted as encoding formal expressions, or formal operations in the sense of universal algebra. We give a construction which formalizes the idea of ``evaluating an expression partially'': for example, ``2+3'' can be obtained as a partial evaluation of ``2+2+1''. 
This construction can be given for any monad, and it is linked to the famous \emph{bar construction}~\cite[VII.6]{maclane}, of which it gives an operational interpretation: the bar construction is a simplicial set, and its 1-cells are partial evaluations.

We study the properties of partial evaluations for general monads. We prove that whenever the monad is weakly cartesian, partial evaluations can be composed via the usual Kan filler property of simplicial sets, of which we give an interpretation in terms of substitution of terms. 

For the case of probability monads, partial evaluations correspond to what probabilists call \emph{conditional expectation} of random variables, and partial evaluation relation is known as \emph{second-order stochastic dominance}.

In terms of rewritings, partial evaluations give an abstract reduction system which is reflexive, confluent, and transitive whenever the monad is weakly cartesian. This manuscript is part of a work in progress on a general rewriting interpretation of the bar construction.

%%%%%%%%%

\end{abstract}

\tableofcontents

\section{Background: monads and formal expressions}\label{secformop}

An interpretation of the theory of monads, in terms of universal algebra~\cite{hyland-power}, is that \emph{a monad is like a consistent choice of spaces of formal expressions in a signature}. 
This interpretation is most accurate for monads on the category of sets, but the categorical constructions work in general.

In more detail, a monad consists of the following data. First of all, we have a functor $T:\cat{C}\to\cat{C}$, which consists of the following assignments:
\begin{enumerate}
 \item To each space $X$, we assign a new space $TX$, which we think of as containing formal expressions of elements of $X$ in a certain signature, modulo the equations specified by the theory. For example, the elements of $TX$ may exactly be the formal sums of elements of $X$.

 \item Given two spaces $X$ and $Y$ and a function $f:X\to Y$, we get a function $Tf:TX\to TY$, which we think of as elementwise substitution. This assignment should preserve identity and composition.
 In the case of formal sums, given a function $f:X\to Y$, we automatically get a function from formal sums of elements of $X$ to formal sums of elements of $Y$ by just ``extending linearly''. For example:
\begin{equation}
 a + b + 2c \quad\longmapsto\quad f(a) + f(b) + 2f(c) .
\end{equation}
\end{enumerate}

In the case of formal sums, any element $x$ can be considered a (trivial) formal sum. For general monads, this is encoded in the unit natural transformation $\eta:\id_{\cat{C}}\Rightarrow T$. Moreover, formal sums of formal sums can be reduced to just formal sums, such as
\begin{equation*}
 ( a + b + c ) + ( a + b + d ) 
\end{equation*}
can be reduced to 
\begin{equation*}
 2 a + 2 b + c + d .
\end{equation*}
This in general is encoded in the monad multiplication transformation $\mu:TT\Rightarrow T$. The unit and multiplication transformations are then required to satisfy the monad equations
\begin{equation}
	\label{monad_laws}
	\begin{tikzcd}
		TTTX \ar{r}{T\mu} \ar{d}{\mu}	& TTX \ar{d}{\mu}	 & TX \ar{d}[swap]{\eta} \ar[equal]{dr}	&	& TX \ar{d}[swap]{T\eta} \ar[equal]{dr}	\\
		TTX \ar{r}{\mu}		& TX				& TTX \ar{r}{\mu}		& TX		& TTX \ar{r}{\mu}	& TX
	\end{tikzcd}
\end{equation}
for all $X$ in $\cat{C}$.

In general, the category $\cat{C}$ need not be concrete. Therefore, instead of elements, we look at generalized elements: in general, what we can interpret as a ``formal expression'' should be a morphism $S\to TX$ for some object $S$ of $\cat{C}$. Let's have the following convenient definition:
\begin{deph}
 Let $(T,\eta,\mu)$ be a monad on a category $C$, and $X$ be an object. A \emph{generalized formal expression on $X$} is a morphism $p:S\to TX$, where $S$ is an object of $C$. 
\end{deph}

If $T$ lives on $\cat{Set}$, then for $S$ we can take a singleton set $1$, recovering the usual formal expressions. The same can be done for many concrete categories. Nevertheless, we will drop the word ``generalized'' and just speak of morphisms $S \to TX$ for general $\cat{C}$ as \emph{formal expressions}.

Often, formal expressions can be \emph{evaluated} to a \emph{result}. For example, the expression $3+2$ can be evaluated to $5$. 
An \emph{algebra} of a monad is an object in which (generalized) formal expressions can be evaluated to actual (generalized) elements. So for the formal sum monad on $\cat{Set}$, the category of algebras is precisely the category of commutative monoids, since in commutative monoids formal sums can be evaluated to actual elements in a way which respects usual rewriting rules for working with sums, namely associativity and commutativity.

Formally, an algebra of the monad $T$ consists of an object $A$ together together with an evaluation map $e:TA\to A$, suitably compatible with $\eta$ and $\mu$ in the sense of making the diagrams
\[
	\begin{tikzcd}
		TTA \ar{r}{Te} \ar{d}{\mu}	& TA \ar{d}{e}		& & A \ar{r}{\eta}\ar{dr}[swap]{\id} & TA \ar{d}{e}	\\
		TA \ar{r}{e}			& A			& & 	& A
	\end{tikzcd}
\]
commute. By \eqref{monad_laws}, every object $TX$ is an algebra with respect to $\mu : TTX \to TX$, the \emph{free algebra} on $X$.

\begin{deph}
 Let $(A,e)$ be a $T$-algebra. Given a (generalized) formal expression $p:S\to TA$, we call its \emph{result} the (generalized) element of $A$ given by $e\circ p: S\to A$.
\end{deph}

\section{Partial evaluations and partial decompositions}\label{secpev}

Consider the sums 
\begin{equation}\label{pdecsum}
3+4+5
\end{equation}
and 
\begin{equation}\label{pevsum}
 7+5 .
\end{equation}
Not only do they have the same result, but in addition, we can say that the sum~\eqref{pevsum} can be obtained from~\eqref{pdecsum} by \emph{partially evaluating the expression}. Just as well, we would like to say that the sum~\eqref{pdecsum} can be obtained from~\eqref{pevsum} by \emph{partially decomposing the terms in the expression}.

Let's try to make this precise. The idea is that there is a \emph{formal sum of formal sums}, i.e.~a formal sum with one level of brackets, such that removing the brackets yields the term on the left, and such that performing the operations in the brackets (and then removing the brackets) yields the term on the right. That is:
\begin{equation*}
 \begin{tikzcd}[row sep=large]
  & (3+4) + (5) \ar{dl}[swap]{\mbox{remove brackets}} \ar{dr}{\mbox{evaluate brackets}} \\
  3+4+5 && 7+5
 \end{tikzcd}
\end{equation*}
As we have seen in Section~\ref{secformop}, the ``formal sums of formal sums'' live in $TTA$. The map which can be seen as ``removing the brackets'' is the multiplication map $\mu:TTA\to TA$, and the map that evaluates the expressions within the brackets is the image of the evaluation map $e$ under the functor $T$, i.e.~$Te:TTA\to TA$. We can then give a general definition of partial evaluations for all monads. 
\begin{deph}\label{defpev}
 Let $(T,\eta,\mu)$ be a monad on a category $\cat{C}$, let $(A,e)$ be a $T$-algebra, and consider the formal expressions $p,q:S\to TA$. A \emph{partial evaluation} of $p$ into $q$, or a \emph{partial decomposition} of $q$ into $p$ is a map $k:S\to TTA$, or ``nested formal expression'', which makes the following diagram commute:
 $$
 \begin{tikzcd}
  & S \ar{dl}[swap]{p} \ar{d}{k} \ar{dr}{q} \\
  TA & TTA \ar{l}{\mu} \ar{r}[swap]{Te} & TA
 \end{tikzcd}
 $$
\end{deph}

\subsection{Basic properties}

From the definition and the triangle identities we have immediately the following result, which is a sort of consistency check: any expression has two trivial partial evaluations, namely to itself and to its result (viewed as a formal expression). 

\begin{prop}\label{trivialpev}
 Let $(A,e)$ be a $T$-algebra as above, and $p:S\to TA$. Then:
 \begin{enumerate}
  \item $p$ admits a partial evaluation to itself;
  \item $p$ admits a partial evaluation to $\eta\circ e \circ p$, which we call its \emph{total evaluation}.
 \end{enumerate}
\end{prop}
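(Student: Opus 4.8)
The plan is to construct the required nested formal expression $k : S \to TTA$ explicitly in each case, using the unit $\eta$ of the monad, and then to verify the two defining triangles of \cref{defpev} by chasing the monad laws \eqref{monad_laws} together with the algebra axioms for $(A,e)$.

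\textbf{Part (a).} For the partial evaluation of $p$ to itself, I would take $k := \eta_{TA} \circ p : S \to TTA$, i.e.\ wrap the whole expression $p$ in a single trivial outer bracket. To see this works I must check that $\mu \circ k = p$ and $Te \circ k = p$. The first is immediate from the left triangle identity for the monad (the unit law $\mu \circ \eta_{T} = \id_{T}$ applied at the object $A$), so $\mu \circ \eta_{TA} \circ p = p$. The second uses naturality of $\eta$: the square expressing $\eta_{TA} \circ (\text{anything into } TA)$ versus $T(\text{that same map}) \circ \eta$ — concretely, naturality of $\eta$ applied to $e : TA \to A$ gives $\eta_{A} \circ e = Te \circ \eta_{TA}$ ... wait, that lands in $TA$ the wrong way; more carefully, naturality of $\eta$ at the morphism $e\colon TA\to A$ reads $Te \circ \eta_{TA} = \eta_A \circ e$, which is not what I want. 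Instead I should take $k := T\eta_A \circ p$, wrapping each element of $A$ appearing in $p$ in its own trivial inner bracket. Then $\mu \circ T\eta_A \circ p = p$ by the right-hand unit law in \eqref{monad_laws} (namely $\mu \circ T\eta = \id$ at $A$), and $Te \circ T\eta_A \circ p = T(e \circ \eta_A) \circ p = T(\id_A) \circ p = p$ by functoriality of $T$ and the unit axiom $e \circ \eta_A = \id_A$ of the algebra. So $k = T\eta_A \circ p$ is the correct choice.

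\textbf{Part (b).} For the total evaluation, I want $k : S \to TTA$ with $\mu \circ k = p$ and $Te \circ k = \eta_A \circ e \circ p$. The natural candidate is to keep $p$ as the inner data but record its total value in the outer layer: take $k := \eta_{TA} \circ p$. Then $\mu \circ \eta_{TA} \circ p = p$ by the unit law $\mu \circ \eta_T = \id$ at $A$. And $Te \circ \eta_{TA} \circ p = \eta_A \circ e \circ p$ by naturality of $\eta$ applied to $e : TA \to A$ (the square $Te \circ \eta_{TA} = \eta_A \circ e$), which is exactly the desired $q = \eta \circ e \circ p$. Hence $k = \eta_{TA} \circ p$ witnesses the partial evaluation of $p$ to its total evaluation.

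\textbf{Main obstacle.} There is no deep difficulty here; the whole content is bookkeeping with the coherence data. The one point that requires care — and the place where a first attempt can go wrong, as the discussion in Part (a) above illustrates — is choosing which of the two ``trivial bracketing'' maps ($\eta_{TA} \circ p$ versus $T\eta_A \circ p$) to use in each case, since the two defining triangles of a partial evaluation are not symmetric: the $\mu$-leg wants a unit law and the $Te$-leg wants either naturality of $\eta$ or the algebra unit axiom, and only one of the two candidates makes both legs work simultaneously. Part (a) needs $T\eta_A \circ p$ (so that the $Te$-leg collapses via the algebra axiom $e\circ\eta_A=\id$), while part (b) needs $\eta_{TA}\circ p$ (so that the $Te$-leg produces $\eta_A\circ e\circ p$ via naturality of $\eta$). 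Once the right map is picked in each case, every equation is a single invocation of a monad law or an algebra axiom.
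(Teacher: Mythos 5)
Your proposal is correct and uses exactly the same witnesses as the paper: $T\eta_A\circ p$ for the partial evaluation of $p$ into itself (via right unitality and the algebra unit axiom) and $\eta_{TA}\circ p$ for the total evaluation (via left unitality and naturality of $\eta$ at $e$). The self-correcting digression in Part (a) resolves to the right choice, so no changes are needed.
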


\begin{proof}
$ $
 \begin{enumerate}
	 \item Consider $(T\eta)\circ p:S\to TTA$. Then $\mu\circ (T\eta)\circ p = p$ by the right unitality~\eqref{monad_laws} of the monad, and $(Te)\circ(T\eta)\circ p = T(e\circ\eta)\circ p = p$ by functoriality of $T$ together with the unit condition of the algebra. Therefore, $(T\eta)\circ p$ gives a partial evaluation of $p$ into itself.
  \item Consider $\eta\circ p:S\to TTA$. We have a diagram
  \begin{equation*}
   \begin{tikzcd}
    TA \ar{d}{e} \ar{r}{\eta} & TTA \ar{d}{Te} \\
    A \ar{r}{\eta} & TA
   \end{tikzcd}
  \end{equation*}
  which commutes by naturality of $\eta$. Now $\mu\circ\eta\circ p=p$ by the left unitality of the monad, and $(Te)\circ\eta\circ p=\eta\circ e\circ p$ by the commutativity of the diagram above. Therefore $\eta\circ p$ gives a partial evaluation of $p$ into $\eta\circ e \circ p$. 
 \end{enumerate}
\end{proof}

\begin{eg}\label{egsum}
	Consider the free commutative monoid monad (or multiset or bag monad) on $\cat{Set}$, take the algebra $(A,e)$ to be the set of natural numbers $\N$ under addition, and let $S=1$, the terminal set. Then the formal expressions are sums of natural numbers, considered as lists up to permutation. The expression $3+4+5$ admits a partial evaluation into itself, given by
 $$
 \begin{tikzcd}
  & (3) + (4) + (5) \ar[mapsto]{dl}{\mu}[swap]{\mbox{remove brackets}} \ar[mapsto]{dr}{\mbox{evaluate brackets}}[swap]{Te} \\
  3+4+5 && 3+4+5
 \end{tikzcd}
 $$ 
 and a partial evaluation into its result $12$, or \emph{total evaluation}, given by 
 $$
 \begin{tikzcd}
  & (3+4+5) \ar[mapsto]{dl}{\mu}[swap]{\mbox{remove brackets}} \ar[mapsto]{dr}{\mbox{evaluate brackets}}[swap]{Te} \\
  3+4+5 && 12
 \end{tikzcd}
 $$
\end{eg}

Here is another consistency check: if $p$ admits a partial evaluation into $q$, then $p$ and $q$ necessarily must have the same result.  

\begin{prop}[Law of total evaluation]\label{totalev}
 Consider the formal expressions $p,q:S\to TA$, and suppose that there exists a partial evaluation from $p$ into $q$. Then $p$ and $q$ have necessarily the same result, i.e.~$e\circ q=e\circ p$. 
\end{prop}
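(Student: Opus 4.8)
The plan is to simply unfold the definition of a partial evaluation and chase it through the associativity square of the algebra $(A,e)$. Let $k : S \to TTA$ be a partial evaluation of $p$ into $q$, so that by \Cref{defpev} we have $\mu \circ k = p$ and $(Te) \circ k = q$. The goal $e \circ q = e \circ p$ then amounts to showing that $e \circ (Te) \circ k = e \circ \mu \circ k$, and for this it suffices to show $e \circ (Te) = e \circ \mu$ as maps $TTA \to TA$.

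This last identity is precisely the first (associativity/compatibility) square in the definition of a $T$-algebra, applied to $A$ itself: the square
\[
	\begin{tikzcd}
		TTA \ar{r}{Te} \ar{d}{\mu}	& TA \ar{d}{e}	\\
		TA \ar{r}{e}			& A
	\end{tikzcd}
\]
commutes by the algebra axioms recalled in \Cref{secformop}. Hence
\[
	e \circ q = e \circ (Te) \circ k = e \circ \mu \circ k = e \circ p,
\]
which is the claim.

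There is essentially no obstacle here: the statement is a direct diagram chase, and the only thing to ``spot'' is that the relevant commuting square is the algebra compatibility square for $(A,e)$ rather than one of the monad laws. It may be worth remarking, for intuition, that this is the abstract counterpart of the obvious fact that rebracketing a sum and then evaluating the inner brackets does not change the overall value — removing brackets (via $\mu$) and then evaluating, versus evaluating the brackets (via $Te$) and then evaluating, give the same result because $e$ is associative over $T$.
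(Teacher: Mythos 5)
Your proof is correct and is essentially the same as the paper's: both unfold the definition of a partial evaluation witness $k$ and apply the algebra compatibility square $e \circ (Te) = e \circ \mu$ to conclude $e\circ q = e\circ p$.
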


\begin{proof}
 The multiplication square of the $T$-algebra $(A,e)$ is a commutative diagram
 \begin{equation*}
  \begin{tikzcd}
   TTA \ar{d}{\mu} \ar{r}{Te} & TA \ar{d}{e} \\
   TA \ar{r}{e} & A
  \end{tikzcd}
 \end{equation*}
 Now suppose that $k:S\to TTA$ gives a partial evaluation of $p$ into $q$, i.e.~$\mu\circ k = p$, and $(Te)\circ k=q$. Then, since the square above commutes,
 \begin{align*}
  e\circ q = e\circ (Te)\circ k = e\circ \mu \circ k = e\circ p ,
 \end{align*}
as was to be shown.
\end{proof}

\begin{eg}
  Following Example~\ref{egsum}, there is a partial expression of $3+4+5$ into $7+5$, witnessed by $(3+4) + (5)$:
 $$
 \begin{tikzcd}
  & (3+4) + (5) \ar[mapsto]{dl}{\mu}[swap]{\mbox{remove brackets}} \ar[mapsto]{dr}{\mbox{evaluate brackets}}[swap]{Te} \\
  3+4+5 && 7+5 
 \end{tikzcd}
 $$
 The two formal expressions have then necessarily the same result, in this case, $12$.
\end{eg}

\subsection{Composing partial evaluations}

There is another appealing property to expect from partial evaluations, namely that if we can partially evaluate $p$ to $q$ and $q$ to $r$, then we expect that we should be able to partially evaluate $p$ to $r$.  

\begin{eg}\label{egcompose}
 Consider again sums of natural numbers. Then the formal sum $1+1+1+1$ can be partially evaluated to $2+2$, and $2+2$ can be partially (and totally) evaluated to $4$. This is witnessed by the following elements:
 \begin{equation*}
  \begin{tikzcd}
  & (1+1) + (1+1) \ar[mapsto]{dl}[swap]{\mu} \ar[mapsto]{dr}{Te} && (2+2) \ar[mapsto]{dl}[swap]{\mu} \ar[mapsto]{dr}{Te} \\
   1+1+1+1 && 2+2 && 4
  \end{tikzcd}
 \end{equation*}
In order to construct a partial evaluation from the left to the right, we need an expression such that removing its brackets yields $1+1+1+1$, and evaluating the brackets yields $4$. The idea is that $4$ is obtained by $2+2$, but each of the $2$ in the expression is itself obtained by $1+1$. Therefore we can \emph{substitute} each term $2$ by the term of which it is a partial evaluation, in other words, we form the expression $((1+1)+(1+1))$ which sits in the diagram:
\begin{equation*}
  \begin{tikzcd} 
  & ((1+1)+(1+1)) \ar[mapsto]{dl}[swap]{\mu} \ar[mapsto]{dr}{TTe} \\
   (1+1) + (1+1) \ar[mapsto]{d}[swap]{\mu} \ar[mapsto, near end, swap]{dr}{Te} & & (2+2) \ar[mapsto]{dl}[near end]{\mu} \ar[mapsto]{d}{Te} \\
   1+1+1+1 & 2+2 & 4
  \end{tikzcd}
 \end{equation*}
In order to get the witness of the composite partial evaluation, we have to then remove the inner nesting, or inner brackets, via the map $T\mu$.
The element thus obtained is $(1+1+1+1)$, as expected:
\begin{equation*}
  \begin{tikzcd} 
  & ((1+1)+(1+1)) \ar[mapsto]{dl}[swap]{\mu} \ar[mapsto]{d}{T\mu} \ar[mapsto]{dr}{TTe} \\
   (1+1) + (1+1) \ar[mapsto]{d}[swap]{\mu} \ar[mapsto, near end, swap]{dr}{Te} & (1+1+1+1) \ar[mapsto]{dl}[near end]{\mu} \ar[mapsto, near end, swap]{dr}{Te} & (2+2) \ar[mapsto]{dl}[near end]{\mu} \ar[mapsto]{d}{Te} \\
   1+1+1+1 & 2+2 & 4
  \end{tikzcd}
 \end{equation*}
\end{eg}

Let's now try to generalize the example above, and the role of ``substitution''. First, let's introduce some terminology. We recall the definition of a cartesian monad.

\begin{deph}[{e.g.~\cite[Section~4.1]{leinster}}]
 Let $(T,\eta,\mu)$ be a monad on a category $\cat{C}$. The monad $(T,\eta,\mu)$ is called \emph{cartesian} if:
 \begin{itemize}
  \item The functor $T$ preserves pullbacks;
  \item All naturality squares of $\eta$ and $\mu$ are pullbacks.
 \end{itemize}
\end{deph}

\begin{eg}
	Every monad on $\cat{Set}$ which arises from a (non-symmetric) operad is cartesian~\cite[Section~C.1]{leinster}. 
\end{eg}

As we will see, partial evaluations for cartesian monads are particularly well-behaved. But cartesianness is also a very restrictive condition, and we thus consider a variant of it, based on the standard concept of weak pullback.

\begin{deph}
 Let $\cat{C}$ be a category. A commuting square in $\cat{C}$
 \begin{equation*}
  \begin{tikzcd}
   A \ar{r}{f} \ar{d}{g} & B \ar{d}{m} \\
   C \ar{r}{n} & D
  \end{tikzcd}
 \end{equation*}
 is a \emph{weak pullback} if for every object $S$ of $\cat{C}$ and pair of arrows $b:S\to B$ and $c:S\to  C$ such that $m\circ b=n\circ c$, there exists an arrow $a: S\to A$ such that the following diagram commutes:
 \begin{equation*}
  \begin{tikzcd}
  S \ar[swap]{dddr}{c} \ar[near end]{dr}{a} \ar{drrr}{b} \\ 
   & A \ar{rr}{f} \ar{dd}{g} && B \ar{dd}{m} \\
   \\
   & C \ar{rr}{n} && D
  \end{tikzcd}
 \end{equation*}
\end{deph}

Note that we do not require the map $a$ to be unique. We use this to generalize the notion of weakly cartesian monad~\cite{weakweber,seldomweak} from $\cat{Set}$ to all categories.

\begin{deph}
 Let $(T,\eta,\mu)$ be a monad on a category $\cat{C}$. We say that $(T,\eta,\mu)$ is \emph{weakly cartesian} if:
 \begin{itemize}
  \item The functor $T$ preserves weak pullbacks;
  \item The naturality squares of $\eta$ and $\mu$ are weak pullbacks.
 \end{itemize}
\end{deph}

\begin{eg}
The free monoid monad on $\cat{Set}$ is cartesian~\cite[Observation~2.1(d)]{seldomweak}. The free commutative monoid monad on $\cat{Set}$ is weakly cartesian, but not cartesian~\cite[Example~8.2]{seldomweak}.
\end{eg}

We have the following result:

\begin{prop}
 \label{trans_prop}
 Let $T$ be a monad on a category $\cat{C}$. Let $(A,e)$ be a $T$-algebra, and suppose that the following diagram is a weak pullback:
 \begin{equation}\label{topdiamond}
  \begin{tikzcd}
   TTTA \ar{r}{TTe} \ar{d}{\mu} & TTA \ar{d}{\mu} \\
   TTA \ar{r}{Te} & TA
  \end{tikzcd}
 \end{equation}

Then the partial evaluation relation on every set of formal expressions $\cat{C}(S,TA)$ is transitive.
\end{prop}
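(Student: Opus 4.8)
The plan is to promote the ``substitution'' heuristic of Example~\ref{egcompose} to a formal argument. Transitivity of the partial evaluation relation on $\cat{C}(S,TA)$ amounts to the following: given formal expressions $p,q,r:S\to TA$, a partial evaluation $k:S\to TTA$ of $p$ into $q$, and a partial evaluation $\ell:S\to TTA$ of $q$ into $r$, we must produce a partial evaluation $m:S\to TTA$ of $p$ into $r$. Unwinding Definition~\ref{defpev}, we know $\mu\circ k=p$, $Te\circ k=q$, $\mu\circ\ell=q$ and $Te\circ\ell=r$, and we seek $m$ with $\mu\circ m=p$ and $Te\circ m=r$.

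For the construction, the key observation is that $\ell$ and $k$ are compatible with the cospan underlying the weak pullback~\eqref{topdiamond}: taking $b:=\ell$ (into the codomain of $TTe:TTTA\to TTA$) and $c:=k$ (into the codomain of $\mu:TTTA\to TTA$) in the notation of the definition of weak pullback, the required compatibility condition is $\mu\circ\ell=Te\circ k$, and it holds because both sides equal $q$. Applying the weak pullback property of~\eqref{topdiamond} to this data yields a map $h:S\to TTTA$ with $\mu\circ h=k$ and $TTe\circ h=\ell$; morally, $h$ is obtained by substituting, inside the witness $\ell$, each result-term by the nested expression witnessing its decomposition. We then set $m:=T\mu\circ h:S\to TTA$, mirroring Example~\ref{egcompose}, where the composite witness is obtained by collapsing the inner brackets via $T\mu$.

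It then remains to check that $m$ witnesses a partial evaluation of $p$ into $r$. For the first equation, $\mu\circ m=\mu\circ T\mu\circ h=\mu\circ\mu\circ h=\mu\circ k=p$, where the middle equality is the associativity law of the monad from~\eqref{monad_laws}. For the second, $Te\circ m=Te\circ T\mu\circ h=T(e\circ\mu)\circ h=T(e\circ Te)\circ h=Te\circ TTe\circ h=Te\circ\ell=r$, using functoriality of $T$, then the multiplication square of the algebra $(A,e)$ (which gives $e\circ\mu=e\circ Te$), then functoriality of $T$ once more. Hence $m$ is the desired partial evaluation.

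The only genuinely creative step is the compatibility observation $\mu\circ\ell=q=Te\circ k$, which records that $q$ is simultaneously the result of removing the outer brackets of $\ell$ and the result of evaluating the brackets of $k$; everything else is routine diagram-chasing with the monad and algebra laws, so I do not anticipate a real obstacle. It is perhaps worth noting that~\eqref{topdiamond} is precisely the naturality square of $\mu$ at the algebra structure map $e:TA\to A$, so the hypothesis is automatic whenever $T$ is weakly cartesian, and in particular for the free commutative monoid monad of Example~\ref{egsum}.
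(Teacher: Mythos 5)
Your proof is correct and follows essentially the same route as the paper: apply the weak pullback property of~\eqref{topdiamond} to the two witnesses (compatible over $q$), obtain a lift $S\to TTTA$, and compose with $T\mu$ to get the composite witness, verifying the two legs via monad associativity and the algebra multiplication square. Your closing remarks (that the lift plays the role of substitution as in Example~\ref{egcompose}, and that~\eqref{topdiamond} is the naturality square of $\mu$ at $e$, hence automatic for weakly cartesian $T$) also match the paper's discussion and corollary.
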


\begin{proof}
We have to prove the following: given $p,q,r:S\to TA$ and $k,h:S\to TTA$ such that $(Te)\circ k=p$, $\mu\circ k=(Te)\circ h=q$, and $\mu\circ h=r$. Then there exists $\rho:S\to TTA$ such that $Te\circ\rho=p$ and $\mu\circ\rho=r$. 

 Consider now the commutative diagram:
 \begin{equation}\label{cubeupbar}
  \begin{tikzcd}[column sep=large]
	  S \ar[dotted]{rrr}{q} \ar[near start,dotted,bend right=10em,"r" description]{rrrrddd} \ar[dotted,swap]{rrddd}{p} \ar[dotted,"k" description]{rrd} \ar[dotted,"h" description,near start]{rrrrd} & & & TA \\
	  & & TTA \ar[crossing over]{ur}{\mu} \ar[crossing over,swap]{dd}{Te} && TTA  \ar[swap]{ul}{Te} \ar{dd}{\mu} \\
	  & & & TTTA \ar[swap]{ul}{TTe} \ar[swap]{ur}{\mu} \ar[crossing over]{dd}{T\mu} \\
   & & TA && TA \\
   & & & TTA \ar{ul}{Te} \ar[swap]{ur}{\mu}
  \end{tikzcd}
 \end{equation}
 (which commutes by the composition, associativity, and naturality squares). 
 Then we have that $p$ sits in the bottom left corner, $q$ in the top corner, and $r$ in the bottom right corner, while $k$ sits in the top left corner, and $h$ in the top right. Since the top diamond is exactly diagram~\eqref{topdiamond}, which by hypothesis is a weak pullback diagram, there exists an $a:S\to TTTA$ such that~\eqref{cubeupbar} still commutes. Therefore $\rho:=(T\mu)\circ a$ is such that $(Te)\circ \rho =p$ and $\mu \circ \rho=r$.
\end{proof}

Since the square~\eqref{topdiamond} is necessarily a weak pullback for any weakly cartesian monad, we have

\begin{cor}
Let $T$ be a weakly cartesian monad. Then for every $T$-algebra $(A,e)$ and every object $S$, the partial evaluation relation on $\cat{C}(S,TA)$ is transitive.
\end{cor}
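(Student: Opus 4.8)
The plan is to deduce the statement immediately from Proposition~\ref{trans_prop}: it suffices to check that, for a weakly cartesian monad $T$ and any $T$-algebra $(A,e)$, the square~\eqref{topdiamond}
\[
\begin{tikzcd}
 TTTA \ar{r}{TTe} \ar{d}{\mu} & TTA \ar{d}{\mu} \\
 TTA \ar{r}{Te} & TA
\end{tikzcd}
\]
is a weak pullback; once this is established, Proposition~\ref{trans_prop} yields transitivity of the partial evaluation relation on $\cat{C}(S,TA)$ for every object $S$.

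The key observation I would make is a pattern recognition one: writing $TTTA = TT(TA)$ and the bottom-left $TTA = T(TA)$, the square~\eqref{topdiamond} is literally the naturality square of the multiplication $\mu \colon TT \Rightarrow T$ evaluated at the morphism $e \colon TA \to A$ of $\cat{C}$. Indeed, naturality of $\mu$ at a morphism $f \colon X \to Y$ is the square with corners $TTX,\ TTY,\ TX,\ TY$ and edges $TTf$, $\mu_X$, $\mu_Y$, $Tf$; specializing to $X = TA$, $Y = A$, $f = e$ reproduces~\eqref{topdiamond} on the nose.

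By definition of a weakly cartesian monad, all naturality squares of $\mu$ are weak pullbacks, hence in particular the one at $e$. Feeding this into Proposition~\ref{trans_prop} finishes the argument. There is essentially no obstacle here; the only point deserving a moment's thought — and it is a very mild one — is that weak cartesianness is stated as a condition on the naturality squares of $\mu$ at \emph{all} morphisms of $\cat{C}$, and we are invoking it at $e \colon TA \to A$, which is a bona fide morphism of $\cat{C}$ (no algebra-homomorphism property or anything further is needed). One could alternatively try to factor~\eqref{topdiamond} as a pasting of the image under $T$ of the algebra multiplication square with another naturality square of $\mu$, invoking that $T$ preserves weak pullbacks, but the direct identification above is shorter and cleaner.
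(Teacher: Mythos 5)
Your proof is correct and follows essentially the same route as the paper: the paper's corollary is deduced from Proposition~\ref{trans_prop} precisely by observing that the square~\eqref{topdiamond} is a weak pullback for any weakly cartesian monad, and your identification of that square as the naturality square of $\mu$ at $e\colon TA\to A$ is exactly the (implicit) justification. Nothing is missing; you have merely made explicit a step the paper leaves to the reader.
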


Since the free commutative monoid monad is weakly cartesian, this construction reproduces Example~\ref{egcompose}.

In the same way, if $T$ is cartesian, or only if $\mu$ is, then the diagram~\eqref{topdiamond} is a pullback. This makes the composition of partial evaluations into an \emph{algebraic} operation, if we keep track of the element of $TTA$ which witnesses each partial evaluation relation.

\section{As a simplicial object}

The diagram~\eqref{cubeupbar} essentially encodes the first three levels of the \emph{bar construction}~\cite[Section~VII.6]{maclane}. In particular, the weak pullback condition of Proposition~\ref{trans_prop} is exactly a Kan filler condition, as for nerves of categories and more generally quasicategories, applied to the bar construction. More generally, the bar construction has the flavor of a higher-categorical extension of the partial evaluation relation. The study of its higher compositional properties is work in progress; in this section, we describe what we know so far. 

\begin{deph}
 Let $T$ be a monad on $\cat{C}$ and $(A,e)$ a $T$-algebra. The \emph{bar construction of $(A,e)$} is the simplicial object $A_\bullet$ in the category of $T$-algebras given by the following assignments for all $i\ge 0$:
 \begin{itemize}
  \item $A_i := T^{i+1}A$;
  \item $d_j:A_{i+1}\to A_{i}$ given by\begin{itemize}
                                        \item $T^{j}\mu:T^{i+2}A \to T^{i+1}A$ for $0\le j< i+1$, and
                                        \item $T^{i+1}e:T^{i+2}A \to T^{i+1}A$ for $j= i+1$;
                                       \end{itemize}
  \item $s_j:A_{i}\to A_{i+1}$ given by $T^{j+1}\eta: T^{i+1}A \to T^{i+2}A$ for $0\le j \le i$.
 \end{itemize}
\end{deph}

The simplicial identities are guaranteed to hold by the monad and algebra structure, and by naturality of the structure maps. This implies, in particular, that given an object $S$ of $\cat{C}$, we get a simplicial set $\cat{C}(S,A_\bullet)$, with the following interpretation:
\begin{itemize}
 \item The vertices of the simplicial set are given by the (generalized) elements of $TA$, i.e.~(generalized) formal expressions;
 \item The 1-simplices are given by witnesses of partial evaluations: since the source and target maps
	 \[
		d_0, d_1 \: : \:  \cat{C}(S,A_1) \longrightarrow \cat{C}(S,A_0)
	 \]
	 are exactly given by applying $\mu$ and $Te$ as in the definition of the partial evaluation relation, Definition~\ref{defpev}. We can hence view the 1-simplices of the bar construction as arrows pointing in the direction of partial evaluation;
 \item For every vertex, or equivalently formal expression, the map
	 \[
		 s_0 \: : \: \cat{C}(S,A_0) \longrightarrow \cat{C}(S,A_1)
	 \]
	 given by $T\eta$ gives an ``identity'' 1-simplex, which has the correct source and target, corresponding to the proof of Proposition~\ref{trivialpev}. 
 \item The composition of 1-simplices, when defined as in the proof of Proposition~\ref{trans_prop}, is given by a 2-simplex which is exactly a Kan filler of an inner horn. When $T$ is weakly cartesian, this filler always exists. When $T$ is cartesian, this filler is moreover unique, and the resulting simplicial set is even the nerve of a category~\cite{segal}.
\end{itemize}

Since partial evaluations (or equivalently, partial decompositions) are in most cases intrinsically directed, these simplicial objects (and the simplicial sets that we obtain by proving them with objects $S$) give ``spaces'' which are intrinsically directed. As spaces, it thus seems most natural to study them with the methods and tools of \emph{directed homotopy theory} (see for example~\cite{directedat}).

\section{In terms of rewriting systems}

An \emph{abstract reduction system} (ARS) is a set equipped with a binary relation, called the \emph{reduction relation}~\cite[Chapter~1]{rewrbook}.

Given a monad $T$ on $\cat{C}$, a $T$-algebra $(A,e)$, and an object $S$, partial evaluations of $S$-indexed expressions give a binary relation on $\cat{C}(S,TA)$. The results of Section~\ref{secpev} then give the following.
\begin{prop}
 The set $\cat{C}(S,TA)$ equipped with the partial evaluation relation $\to$ gives an ARS with the following properties:
 \begin{itemize}
  \item Reflexivity: for each $s\in \cat{C}(S,TA)$, $s\to s$;
  \item Confluence: if for $s,t,u \in \cat{C}(S,TA)$ we have
  \begin{equation*}
   \begin{tikzcd}[row sep=small, column sep=small]
    & s \ar{dr}\ar{dl} \\
    t && u 
   \end{tikzcd}
  \end{equation*}
  then there exists $z\in \cat{C}(S,TA)$ such that
  \begin{equation*}
   \begin{tikzcd}[row sep=small, column sep=small]
    & s \ar{dr}\ar{dl} \\
    t \ar{dr} && u \ar{dl} \\
    & z 
   \end{tikzcd}
  \end{equation*}
  In particular, $z$ can always be given by the ``total evaluation'' $\eta\circ e\circ t = \eta\circ e\circ u$ of Proposition~\ref{totalev};
  \item If $T$ is weakly cartesian, then $\to$ is transitive. 
 \end{itemize}
\end{prop}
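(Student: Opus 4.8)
The plan is to observe that every clause of the proposition is an immediate consequence of a result already proved in Section~\ref{secpev}, so the proof amounts to assembling them in the right order. \emph{Reflexivity} is Proposition~\ref{trivialpev}(a): for any $s\colon S\to TA$ the morphism $(T\eta)\circ s\colon S\to TTA$ witnesses a partial evaluation of $s$ into itself, hence $s\to s$.

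For \emph{confluence}, I would argue as follows. Suppose $s\to t$ and $s\to u$. By the law of total evaluation (Proposition~\ref{totalev}), $e\circ t=e\circ s=e\circ u$, so the two ``total evaluations'' $\eta\circ e\circ t$ and $\eta\circ e\circ u$ are literally the same morphism, which I call $z:=\eta\circ e\circ s\colon S\to TA$. By Proposition~\ref{trivialpev}(b), $t$ admits a partial evaluation into $\eta\circ e\circ t=z$ and $u$ admits one into $\eta\circ e\circ u=z$, i.e.\ $t\to z$ and $u\to z$, which closes the span. The point worth spelling out here is that this argument uses \emph{neither} weak cartesianness nor the reflexive--transitive closure: the common reduct $z$ is reached in a single partial-evaluation step, so what we get is confluence in the strong, one-step (``diamond-like'') sense supplied by total evaluation. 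This is the one place where a reader might expect extra hypotheses and finds none, so I would state it carefully.

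For \emph{transitivity} under the weak cartesianness assumption, I would appeal to the corollary following Proposition~\ref{trans_prop}. The only thing that needs checking is that its hypothesis holds, namely that the square~\eqref{topdiamond} is a weak pullback; but that square is precisely the naturality square of $\mu\colon TT\Rightarrow T$ evaluated at the morphism $e\colon TA\to A$ (take the source object to be $TA$, the target object to be $A$, and the morphism to be $e$ in the naturality square), and for a weakly cartesian monad all naturality squares of $\mu$ are weak pullbacks. Hence $\to$ is transitive, and the proof is complete. There is no genuine obstacle at this level: the substantive content lives in the earlier propositions, above all in Proposition~\ref{trans_prop}, whose proof is the real use of the weak pullback / Kan filler condition via the cube~\eqref{cubeupbar}. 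If anything, the only subtlety to get right in the write-up is keeping the three properties logically separated — in particular making clear that confluence is free and independent of the weak-cartesian hypothesis that transitivity requires.
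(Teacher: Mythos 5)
Your proposal is correct and follows exactly the route the paper intends: the paper gives no separate proof, noting only that the results of Section~\ref{secpev} yield the proposition, and your assembly — reflexivity from Proposition~\ref{trivialpev}, one-step confluence via the common total evaluation from Propositions~\ref{totalev} and~\ref{trivialpev}, and transitivity from Proposition~\ref{trans_prop} after recognizing~\eqref{topdiamond} as the naturality square of $\mu$ at $e$ — is precisely that argument. Your observation that confluence needs neither weak cartesianness nor transitive closure is also consistent with the paper's statement.
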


The composition of partial evaluations can be thought of as a ``rewriting of rewritings'', which points to the theory of \emph{higher rewritings} (see for example \cite{burroni_polygraphs,3drewriting}). At least in this framework, however, higher rewrite rules are defined in terms of simplicial rather than globular shapes.

\section{Examples}

Here are some examples of monads and of the partial evaluations that they induce. Here we restrict to monads on $\cat{Set}$, and traditional elements (as in, arrows from the terminal set $1\to X$ as opposed to more general arrows $S\to X$).

\paragraph{Monoid and group action monads.}
Let $G$ be a monoid (or group) in $\cat{Set}$. The same example works more generally for internal monoids (or groups) in a cartesian monoidal category in terms of generalized elements, but we explain it only in the case of $\cat{Set}$ for simplicity.

The assignment $X\mapsto G\times X$ is part of a functor equipped with a monad structure, with unit and multiplication induced by those of $G$, and the algebras $e:G\times A\to A$ are the objects equipped with $G$-actions. 
 Let now $(g,x)$ and $ (h,y)$ be elements of $G\times A$. We have that $(h,y)$ is a partial evaluation of $(g,x)$ if and only if there is an element $(h,\ell,x)\in G\times G\times A$ such that $h\ell=g$ and $\ell x=y$. In pictures:
 \begin{equation*}
  \begin{tikzcd}[,row sep=huge]
   x \ar[bend left]{rr}{g} \ar{r}[swap]{\ell} & \ell x \ar{r}[swap]{h} & g x
  \end{tikzcd}
 \end{equation*}
 In other words, $(h,y)$ is a partial evaluation of $(g,x)$ if and only if we can write $g$ as a composite $h\ell$, such that ``applying only the part $\ell$ to $x$ gives $y$''. So $(h,y)$ is ``further along in the orbit'' than $(g,x)$. 
 If $G$ is a group, then whenever $x$ and $y$ are on the same orbit we can find the decomposition above, by setting $\ell=h^{-1}g$, and the partial evaluation relation is symmetric: it is the equivalence relation given by belonging to the same orbit. If $G$ is only a monoid, instead, then the partial evaluation relation is generally stronger than being in the same orbit and need not be symmetric. 

As this monad (on $\cat{Set}$) is associated to a non-symmetric operad, it is a cartesian monad. Thus witnesses of partial evaluations can be uniquely composed (and their composition is given by the composition of the monoid or group). In this case, the category whose nerve is the simplicial set $\cat{Set}(1,A_\bullet)$ arising from the bar construction has pairs $(g,x)$ as above as objects, and triples $(h,\ell,x)$ as above as morphisms, with domain $(h\ell,x)$ and codomain $(h,\ell x)$.

\paragraph{Idempotent monads.} For idempotent monads, it is easy to check that all partial evaluations are trivial, in the sense that the partial evaluation relation is the equality relation.

\paragraph{Free monoid monad.} This monad is also associated to an operad. Therefore, also here, partial evaluations can be uniquely composed, and form a category. We are currently not aware of a more explicit description of this category.

\paragraph{Free commutative monoid monad.} This monad is weakly cartesian~\cite{seldomweak}. Therefore, partial evaluation witnesses in $TTA$ can still be composed, but the composition is typically not unique.

\section{Partial evaluations in probability}

Partial evaluations for probability monads permit to compare probability distributions in terms of \emph{how spread} or \emph{how random} they are. 

Common ways of measuring the ``randomness'' of a probability measures are functionals like variance and entropy. However, there is important information that a single real number cannot encode. Intuitively, a single number can measure only ``how much'' the randomness is, but not ``where'', or ``in which way''. 

\begin{eg}
 Consider for example the probability distributions on $\R$ whose densities are represented in the following picture.
 \begin{center}
 \begin{tikzpicture}[baseline=(current  bounding  box.center),>=stealth]
  \draw[-,thick] (-5,0) -- (5,0);
  \node[bullet,label=below:$-1$] (0) at (-3,0) {};
  \node[bullet,label=below:$0$] (0) at (0,0) {};
  \node[bullet,label=below:$1$] (0) at (3,0) {};
  
  \draw[-] (-3,0) .. controls (-2,0) and (-1,1) .. (0,1) node[midway,above] {$p$};
  \draw[-] (3,0) .. controls (2,0) and (1,1) .. (0,1) ;
  
  \draw[-,thick,dashed] (-2,0) .. controls (-0.8,0) and (-0.5,2) .. (0,2) node[near end,left] {$q$};
  \draw[-,thick,dashed] (2,0) .. controls (0.8,0) and (0.5,2) .. (0,2) ;
  
  \draw[-,thick,dotted] (1.5,0) .. controls (1.8,0) and (2.3,3) .. (2.5,3) node[midway,left] {$r$};
  \draw[-,thick,dotted] (3.5,0) .. controls (3.2,0) and (2.8,3) .. (2.5,3) ;
 \end{tikzpicture}
 \end{center}
	One can say that $p$ is ``more random'' or ``more spread'' than $q$ \emph{around the same center of mass}. Instead, while $r$ looks more ``peaked'' than $q$, it is so ``somewhere else'': it has indeed less randomness \emph{quantitatively}, but over \emph{different regions}. In a partial order, we would say that $q$ and $r$ are incomparable. In higher dimensions, the same would be true if the two distributions were spread around the same center of mass, but along different directions. This is what we mean by ``where the randomness is''. 
\end{eg}

It turns out that partial evaluations can be employed to detect this finer notion of randomness, and that they are related to the so-called \emph{second-order stochastic dominance} relation~\cite{fishburn}.
In the rest of this section we will sketch how this works. The details have been worked out in the second author's PhD thesis \cite[Chapter~4]{thesis}.
Again we will focus on set-theoretic elements rather then generalized elements.

\subsection{The idea of probability monads}

The idea of using monads in the context of probability theory, as we describe it here, goes back to Lawvere~\cite{early} and Giry~\cite{giry}.

We have seen that monads can be interpreted in terms of formal expressions encoding possible ``operations''. In the case of probability monads, the operations in question are formal \emph{convex combinations}, or \emph{mixtures}.

Consider a coin flip, where ``heads'' and ``tails'' both have probability $1/2$. Then \emph{in some sense}, this is a convex combination of ``heads'' and ``tails''. Formally, the set $\{\mbox{``heads''},\mbox{``tails''}\}$ is not set in which convex combinations are defined, so one can't really take actual mixtures of its elements. However, one can embed $\{\mbox{heads},\mbox{tails}\}$ into the space
\begin{equation*}
 \big\{ \lambda\,\mbox{``heads''} + (1-\lambda)\,\mbox{``tails''} \; | \: \lambda \in [0,1] \big\},
\end{equation*}
using the map which sends
\begin{align*}
	\mbox{``heads''} & \mapsto 1\,\mbox{``heads''} + 0\,\mbox{``tails''}, \\
	\mbox{``tails''} & \mapsto 0\,\mbox{``heads''} + 1\,\mbox{``tails''}.
\end{align*}
In \emph{this} new space, one can actually take convex combinations: for example, $1/2\,\mbox{``heads''} + 1/2\,\mbox{``tails''}$ is now a convex combination of the extremal points $\mbox{``heads''}$ and $\mbox{``tails''}$.
In general one does not only take finite convex combinations, but rather integrals with respect to normalized measures, so we are talking about \emph{generalized} mixtures, in the sense of Choquet theory~\cite{winkler}. 
The interpretation is nevertheless the same.
\begin{itemize}
 \item Given an object $X$, which we can think of a set of possible (deterministic) states, we can form an object $PX$, which contains ``formal mixtures'' of elements of $X$;
 \item Every function $f:X\to Y$ gives a function $Pf:PX\to PY$ by convex-linear extension;
 \item $X$ is embedded into $PX$ via a map $\delta:X\to PX$ which maps an element $x\in X$ to the trivial formal convex combination $x$;
 \item Formal mixtures of formal mixtures can be evaluated using the map $E:PPX\to PX$, as the following example illustrates.
\end{itemize}

\begin{eg}\label{egcoins}
 Suppose that you have two coins in your pocket. Suppose that one coin is fair, with ``heads'' on one face and ``tails'' on the other; suppose the second coin has ``heads'' on both sides. Suppose now that you draw a coin randomly, and flip it. 
 
 We can sketch the probabilities in the following way:
 \begin{equation*}
  \begin{tikzcd}[column sep=tiny]
   &&& ? \ar{dll}[swap]{1/2} \ar{drr}{1/2} \\
   & \mbox{coin 1} \ar{dl}[swap]{1/2} \ar{dr}{1/2} &&&& \mbox{coin 2} \ar{dl}[swap]{1} \ar{dr}{0} \\
   \mbox{heads} && \mbox{tails} && \mbox{heads} && \mbox{tails}
  \end{tikzcd}
 \end{equation*}
 Let $X$ be the set $\{\mbox{``heads''},\mbox{``tails''}\}$. A coin gives a \emph{law} according to which we will obtain ``heads'' or ``tails'', so it determines an element of $PX$. Since the choice of coin is also random (we also have a \emph{law on the coins}), the law on the coins determines an element of $PPX$.
 
 By averaging, the resulting overall probabilities are 
 \begin{equation*}
  \begin{tikzcd}[column sep=tiny]
   & ? \ar{dl}[swap]{3/4} \ar{dr}{1/4} \\
   \mbox{heads} && \mbox{tails} 
  \end{tikzcd}
 \end{equation*}
 In other words, the ``average'' or ``composition'' can be thought of as an assignment $E:PPX\to PX$, from laws of ``random random variables'' to laws of ordinary random variables.
\end{eg}

There are spaces, for example $\R$, where one \emph{can} take actual mixtures. These correspond exactly to the \emph{algebras} of $P$. In other word, a $P$-algebra is a space with (finite and possibly suitably infinite) convex combinations satisfying suitable equations, such as a convex subset of some vector space. Taking expectation values is one of the most important operations in probability theory: the spaces where this can be done are precisely the algebras of a probability monad. 

The details of how this is carried out in practice vary, depending on the choice of category, of monad, and so on. So in particular, one may get different sorts of ``convex spaces''. The probability monad that we use in this section, the Kantorovich monad, has as algebras precisely the closed convex subsets of Banach spaces (see~\cite{ours_kantorovich}). Another example in the literature is the Radon monad on the category of compact Hausdorff spaces: its algebras are precisely the compact convex subsets of locally convex topological vector spaces~\cite{swirszcz,keimel}. 

\subsection{Two examples}

We recall here the definition of two probability monads: the distribution monad on $\cat{Set}$, of use in theoretical computer science, and the Kantorovich monad, which one can consider as an analogue with possibly continuous distributions, in the category of complete metric spaces. 

First we sketch the basic construction of the \emph{distribution monad}, also known as the \emph{convex combination monad} or \emph{finitary Giry monad}.

Let $X$ be a set. Define $D X$ as the set whose elements are functions $p:X\to[0,1]$ such that $p(x)\ne 0$ for only finitely many $x$, and $\sum_{x\in X} p(x)=1$.
Note that the sum above is finite if one excludes all the vanishing terms.
The elements of $D X$ are called \emph{finite distributions} or \emph{finitely supported probability measures} over $X$. 

Given a function $f:X\to Y$, one defines the \emph{pushforward} $D f:D X\to D Y$ as follows. Given $p\in D X$, then $(D f)(p)\in D Y$ is the function
$$
y \;\mapsto\; \sum_{x\in f^{-1}(y)} p(x) .
$$
This makes $D$ into an endofunctor on $\cat{Set}$. 
The unit map $\delta:X\to D X$ maps the element $x\in X$ to the function $\delta_x:X\to[0,1]$ given by
$$
\delta_x(y) \;=\; \begin{cases}
1 & y=x ;\\
0 & y\ne x .
\end{cases}
$$
The multiplication map $E:D D X\to D X$ maps $\xi\in D D X$ to the function $E\xi\in D X$ given by
$$
E\xi(x) \;=\; \sum_{p\in D X} p(x) \, \xi(p).
$$
The maps $E$ and $\delta$ satisfy the usual monad laws.
The $D$-algebras are known as \emph{convex spaces}, and their morphisms as \emph{affine} or \emph{convex-linear maps}. 
For more details, see for example~\cite{fritz} and~\cite{jacobs}.

The Kantorovich monad is a monad on the category $\cat{CMet}$ of complete metric spaces and short (nonexpanding) maps, i.e.~functions $f:X\to Y$ such that for every $x,x'\in X$,
$$
d\big( f(x),f(x') \big) \le d(x,x') .
$$

\begin{deph}
 Let $X$ be a complete metric space. The \emph{Kantorovich-Wasserstein space} $PX$ is the space whose elements are Radon probability measures on $X$ with finite first moment, and whose metric is given by:
 $$
 d(p,q) := \sup_{f:X\to\R} \int f \, dp - \int f \, dq ,
 $$
 where the supremum is taken over all the short maps $X\to \R$.
\end{deph}

The assignment $X\mapsto PX$ is part of a functor: we can assign to each morphism $f:X\to Y$ a morphism $Pf:PX\to PY$ given by the push-forward of probability measures. In other words, if $p\in PX$ and $A$ is a measurable subset of $Y$, then:
$$
(Pf)(p)(A) := (f_*p)(A) = p(f^{-1}(A)) .
$$

The unit of the monad is given by the Dirac delta map $\delta:X\to PX$, which assigns to each $x\in X$ the Dirac mass $\delta_x$ concentrated at $X$. The composition $E:PPX\to PX$ is given by integration, as in Example~\ref{egcoins}: if $\mu\in PPX$ and $A$ is a measurable subset of $X$, then
$$
(E\mu)(A) := \int_{PX} p(A) \, d\mu(p) .
$$

The algebras of the Kantorovich monad must be first of all objects of our category, i.e.\ complete metric spaces. Moreover, as we have seen, they should be closed with respect to convex combinations in some sense, as for example convex regions of a vector space. Closed convex subsets of Banach spaces are then an ideal candidate: they are complete metric spaces, and they are convex. It can be proven~\cite[Section~5.3]{ours_kantorovich} that the $P$-algebras in $\cat{CMet}$ are \emph{exactly} the closed convex subsets of Banach spaces (up to isomorphism), with the structure map given by the (Bochner) integral. 

For more details, we refer the reader to~\cite{ours_kantorovich} and~\cite{thesis}.

\subsection{Partial expectations, dilations, conditional expectations}

Let's now study partial evaluations for algebras of the two probability monads mentioned above. 
The material in this subsection about the Kantorovich monad be found more in detail in~\cite[Section~4.2.1]{thesis}, in particular, all the proofs can be found there (modulo some differences in terminology). The result is closely related to previous work of Winkler and Weizsäcker (see~\cite{winkler} and the discussion therein). 

First of all, for both monads the partial evaluation relation is transitive.

\begin{prop}
Consider a function $f:X\to Y$. The following commutative diagram is a weak pullback.
$$
\begin{tikzcd}
 DDX \ar{d}{\mu} \ar{r}{DDf} & DDY \ar{d}{\mu} \\
 DX \ar{r}{Df} & DY
\end{tikzcd}
$$
\end{prop}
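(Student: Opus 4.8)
The plan is to argue directly in $\cat{Set}$. Since a weak pullback imposes no uniqueness requirement, it suffices to verify the lifting property for $S=1$: for a general $S$ one applies the construction below elementwise and reassembles the results into a function $S \to DDX$. So fix $\alpha \in DX$ and $\beta \in DDY$ with $\mu(\beta) = Df(\alpha)$ (here $\mu = E$ is the multiplication of $D$); the goal is to produce some $\gamma \in DDX$ with $\mu(\gamma) = \alpha$ and $DDf(\gamma) = \beta$. The guiding idea is disintegration, i.e.\ conditional distributions: first break $\alpha$ up along the fibres of $f$, then use that decomposition to transport the distributions occurring in $\beta$ from $Y$ back to $X$.

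First I would disintegrate $\alpha$ over $f$. For each $y \in Y$ with $Df(\alpha)(y) \neq 0$, let $\alpha_y \in DX$ be $\alpha$ conditioned on the fibre $f^{-1}(y)$, namely $\alpha_y(x) := \alpha(x)/Df(\alpha)(y)$ for $x \in f^{-1}(y)$ and $\alpha_y(x):=0$ otherwise. Two elementary computations then give $Df(\alpha_y) = \delta_y$ and the reconstruction identity $\alpha = \sum_y Df(\alpha)(y)\,\alpha_y$, the sum ranging over the finite support of $Df(\alpha)$.

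Next I would lift $\beta$. If $q$ lies in the support of $\beta$ and $y$ in the support of $q$, then $Df(\alpha)(y) = \mu(\beta)(y) = \sum_{q'} q'(y)\,\beta(q') \geq q(y)\,\beta(q) > 0$, so $\alpha_y$ is available; put $p_q := \sum_y q(y)\,\alpha_y \in DX$ and note $Df(p_q) = \sum_y q(y)\,\delta_y = q$ by the first step. Then define $\gamma \in DDX$ to be the pushforward of $\beta$ along $q \mapsto p_q$, i.e.\ $\gamma := \sum_q \beta(q)\,\delta_{p_q}$. The verification is routine: by functoriality of $D$ one gets $DDf(\gamma) = \sum_q \beta(q)\,\delta_{Df(p_q)} = \sum_q \beta(q)\,\delta_q = \beta$, while $\mu(\gamma)(x) = \sum_q \beta(q)\,p_q(x) = \sum_q \beta(q)\sum_y q(y)\,\alpha_y(x) = \sum_y \mu(\beta)(y)\,\alpha_y(x) = \sum_y Df(\alpha)(y)\,\alpha_y(x) = \alpha(x)$ by the reconstruction identity.

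I do not expect a genuine obstacle here: the witness $\gamma$ is completely explicit. The one place where the hypothesis $\mu(\beta) = Df(\alpha)$ is actually used — and the point to phrase carefully — is that the conditional distributions $\alpha_y$ are defined for precisely those $y$ occurring in some $q$ in the support of $\beta$; this is exactly the inclusion $\supp(\mu(\beta)) \subseteq \supp(Df(\alpha))$, which holds because the two measures coincide. (Specializing to $f = e\colon DA \to A$ recovers the weak pullback hypothesis of Proposition~\ref{trans_prop}, hence transitivity of partial evaluations for $D$-algebras. The analogue for the Kantorovich monad $P$ on $\cat{CMet}$ follows the same outline, but ``disintegration along the fibres of $f$'' must be replaced by a genuine measure-theoretic disintegration and one must track completeness and the short-map conditions; that bookkeeping is where the real work lies, and is carried out in~\cite[Section~4.2.1]{thesis}.)
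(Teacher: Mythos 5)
Your proof is correct and takes essentially the same route as the paper: the paper likewise conditions the given element of $DX$ on the fibres of $f$ (its $p|_y$ is your $\alpha_y$) and builds the lift in $DDX$ by replacing each $\delta_y$ occurring inside the given element of $DDY$ with that conditional, then verifies the two legs exactly as you do. Your explicit remark that the conditionals are only needed at points of $\supp(\mu(\beta))=\supp(Df(\alpha))$ is a detail the paper leaves implicit.
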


\begin{proof}
Let $p\in DX$, let $\alpha\in DDY$ be given as the formal convex combination
$$
\alpha \coloneqq \sum_{q\in DY} \alpha_q\,\delta \left( \sum_{y\in Y} q_y\, \delta_y \right). 
$$
Let $p=\sum_x p_x \, (x)$, and suppose that $\mu(\alpha)=f_*p$, which means that for every $y\in Y$,
$$
 \sum_{q\in DY} \alpha_q \, q_y = \sum_{x\in f^{-1}(y)} p_x . 
$$

Take now the measure $\beta\in DDX$ given by
$$
\beta \coloneqq \sum_{q\in DY} \alpha_q \,\delta \left( \sum_{y\in Y} q_y \cdot p|_y \right). 
$$

We have
$$
f_{**}\beta = \sum_{q\in DY} \alpha_q\, \delta \left( \sum_{y\in Y} q_y \,f_*( p|_y) \right) = \sum_{q\in DY} \alpha_q\,\delta \left( \sum_{y\in Y} q_y \,\delta_y \right) = \alpha.
$$

Just as well,
\begin{align*}
 \mu(\beta) &= \sum_{q\in DY} \alpha_q  \sum_{y\in Y} q_y \cdot p|_y  = \sum_{y\in Y} \sum_{q\in DY} \alpha_q \, q_y \cdot p|_y \\
 &= \sum_{y\in Y} \sum_{x\in f^{-1}(y)} p_x \cdot p|_y = \sum_{y\in Y} (f_*p)_y \, \delta_y \cdot p|_y = f_*p \cdot p|_y = p . 
\end{align*}
\end{proof}

The analogous statement for the Kantorovich monad is the following result~\cite[Theorem~2.6.9]{thesis}:

\begin{thm}
 For every naturality square
\[\begin{tikzcd}
	TTX \ar{r}{E} \ar{d}{TTf} & TX \ar{d}{Tf} \\
	TTY \ar{r}{E} & TY
\end{tikzcd}\]
 of the multiplication transformation $E:PP\Rightarrow P$, the weak universality property required of a weak pullback holds for maps out of the singleton space $1$.
\end{thm}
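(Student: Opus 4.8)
The statement to prove, once unwound, is the following: given set-theoretic elements $\mu \in PPY$ and $p \in PX$ --- that is, maps $1 \to PPY$ and $1 \to PX$ --- such that $Pf(p) = E(\mu)$ in $PY$ (equivalently $f_*p = E(\mu)$), we must produce an element $\beta \in PPX$ with $E(\beta) = p$ and $PPf(\beta) = \mu$. The plan is to mimic the construction given above for the distribution monad $D$: the role played there by the elementary disintegration $p = \sum_y (f_*p)_y\cdot p|_y$ of a finite distribution is now played by the measure-theoretic \emph{disintegration} (regular conditional probability) of the Radon measure $p$ along the map $f$.

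First I would fix such a disintegration: a Borel-measurable family $(p|_y)_{y\in Y}$ of elements of $PX$ with $p|_y$ concentrated on the fibre $f^{-1}(y)$ and with $p(A) = \int_Y p|_y(A)\, d(f_*p)(y)$ for every Borel $A \subseteq X$. Granting this, define for each $q \in PY$ the measure $\hat q \in PX$ by $\hat q(A) \coloneqq \int_Y p|_y(A)\, dq(y)$, and then set
\[
 \beta \;\coloneqq\; (q\mapsto \hat q)_*\,\mu \;\in\; PPX ,
\]
the pushforward of $\mu$ along the assignment $q \mapsto \hat q$. Before $\beta$ can be used, one must check that it is a legitimate element of $PPX$: that $q\mapsto\hat q$ is Borel measurable as a map $PY \to PX$ (which follows from measurability of $y\mapsto p|_y$), that the resulting Borel measure is Radon, and that it has finite first moment. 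The last point is a short computation --- the first moment of $\beta$ equals $\int_Y d(p|_y,\delta_{x_0})\, d(f_*p)(y) = d(p,\delta_{x_0})$ for any $x_0 \in X$, which is finite because $p$ has finite first moment.

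It then remains to verify the two defining equations. For $PPf(\beta) = \mu$: since $PPf = (Pf)_*$ and $\beta = (q\mapsto\hat q)_*\mu$, it suffices to show that $Pf(\hat q) = q$ for $\mu$-almost every $q$. Now $Pf(\hat q)(B) = \hat q(f^{-1}(B)) = \int_Y p|_y(f^{-1}(B))\, dq(y) = \int_Y \1_B(y)\, dq(y) = q(B)$, where the third equality uses that $p|_y$ is concentrated on $f^{-1}(y)$. This holds for $(f_*p)$-almost every $y$; since $E(\mu) = f_*p$, the $(f_*p)$-null set on which it can fail is $q$-null for $\mu$-almost every $q$, which is exactly what is needed. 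For $E(\beta) = p$: for every Borel $A \subseteq X$,
\begin{align*}
 E(\beta)(A) &= \int_{PX} r(A)\, d\beta(r) = \int_{PY} \hat q(A)\, d\mu(q) = \int_{PY}\int_Y p|_y(A)\, dq(y)\, d\mu(q) \\
 &= \int_Y p|_y(A)\, d(E\mu)(y) = \int_Y p|_y(A)\, d(f_*p)(y) = p(A) ,
\end{align*}
using Tonelli to exchange the two integrals, the identity $E(\mu) = \int_{PY} q\, d\mu(q)$, and finally $E(\mu) = f_*p$ together with the disintegration formula for $p$.

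The crux of the argument --- and its only genuinely analytic ingredient --- is the existence and Borel measurability of the disintegration $(p|_y)_{y\in Y}$ at the level of generality needed here: classical disintegration theorems are stated for Radon measures on Polish or Radon spaces, so one must use the form of the result appropriate to the objects over which the Kantorovich monad is built; this is worked out in \cite[Section~4.2.1]{thesis} (compare the closely related discussion in \cite{winkler}). A secondary, more bookkeeping obstacle is verifying that $\beta$ is Radon with finite first moment, so that it genuinely lies in $PPX$ rather than merely being a Borel probability measure on $PX$. Finally, the restriction to maps out of the singleton $1$ is not cosmetic: to obtain the weak pullback property for an arbitrary test object $S$ one would need the assignment $(p,\mu)\mapsto\beta$ (or at least some selection of it) to be a morphism of $\cat{CMet}$, i.e.\ to depend nonexpansively on its inputs, whereas $y\mapsto p|_y$ is in general only measurable and not short --- so that case is considerably more delicate and is not claimed here.
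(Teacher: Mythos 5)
The paper gives no proof of this theorem in the text --- it is imported from \cite[Theorem~2.6.9]{thesis} --- so the only in-paper benchmark is the proof of the preceding proposition for the distribution monad, and your argument is exactly that proof transported to the continuous setting: the fibrewise conditionals $p|_y$ become a disintegration of $p$ along $f$, $\hat q$ plays the role of $\sum_y q_y\cdot p|_y$, and the two verifications $PPf(\beta)=\mu$, $E(\beta)=p$ proceed as in the finite case. The outline is sound, and the points you explicitly defer --- existence and measurability of the disintegration for Radon measures on possibly non-separable complete metric spaces, and the check that $\beta$ is Radon with finite first moment so that it really lies in $PPX$ --- are precisely the analytic content supplied by the cited thesis, so as a blind reconstruction this is the right approach.
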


This is enough to see that the partial evaluation relation for algebras of the Kantorovich monad is transitive at the level of ordinary elements. But more generally, we do not know:

\begin{prob}
Is the multiplication of the Kantorovich monad weakly cartesian?
\end{prob}

Given a set or complete metric space $X$, let $p,q$ be distributions in $DX$ or $PX$, respectively. The intuition behind a partial evaluation from $p$ to $q$ is that $q$ is ``more concentrated'' than $p$, or ``closer to a delta at its center of mass''. From the statistical point of view, $q$ is better approximated by just looking at its expectation than $p$, since $p$ is ``more spread out''. 

Just as well, also the inverse process, partial decomposition, is useful in probability. It is known and it goes under the name of a \emph{dilation}: a random map which intuitively ``only spreads, but does not translate'' (think of diffusion without drift, or the kernel of a martingale). In statistics, this roughly corresponds to ``adding unbiased noise'', or ``casual, not systematic errors''. 

\begin{deph}
 In the category of sets, let $(A,e)$ be a $D$-algebra (for example, $A=\R$), and let $p\in DA$. A \emph{$p$-dilation} is a map $k:A\to DA$ such that for all $a$ in the support of $p$, $e(k(a))=a$. 
\end{deph}

\begin{deph}
 In the category of complete metric spaces, let $(A,e)$ be a $P$-algebra. For $p\in PA$, a \emph{$p$-dilation} is a map $k:A\to PA$, $e(k(a))=a$ for $p$-almost all $a \in A$. 
\end{deph}

Trivially, every dilation is a $p$-dilation. The most trivial dilations are the unit components of $D$ and $P$. 

\begin{prop}
 In the category of sets, let $(A,e)$ be a $D$-algebra, and let $p,q\in DA$. The following conditions are equivalent:
 \begin{enumerate}
  \item\label{pevcond} There exists a partial decomposition of $p$ into $q$;
  \item\label{dilcond} There exists a $p$-dilation $k$ such that $E\circ Dk (p)=q$.
 \end{enumerate}
\end{prop}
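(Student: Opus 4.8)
The plan is to prove the two implications directly, working with the distribution monad $D$ and writing a typical element $\xi\in DDA$ as a finite convex combination $\xi=\sum_{r\in DA}\xi_r\,\delta_r$ of Dirac masses at distributions $r\in DA$. The only structural inputs will be the unit law $e\circ\delta=\id_A$, the algebra multiplication law $e\circ E=e\circ De$, and the elementary fact that the pushforward $Df(p)$ depends only on the restriction of $f$ to $\supp(p)$. First I would unwind Definition~\ref{defpev} for this monad: a partial decomposition of $p$ into $q$ is precisely an element $\xi\in DDA$ with $E\xi=q$ and $De(\xi)=p$.

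For \ref{dilcond}$\Rightarrow$\ref{pevcond}, given a $p$-dilation $k$ with $E\circ Dk(p)=q$, I would take $\xi:=Dk(p)$. Then $E\xi=q$ holds by hypothesis, and $De(\xi)=D(e\circ k)(p)$; since $e\circ k$ agrees with $\id_A$ on $\supp(p)$, the pushforward observation gives $D(e\circ k)(p)=p$, so $\xi$ is the required witness.

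For \ref{pevcond}$\Rightarrow$\ref{dilcond}, start from $\xi=\sum_r\xi_r\,\delta_r$ with $E\xi=q$ and $De(\xi)=p$; spelled out, the latter says $p_a=\sum_{r:\,e(r)=a}\xi_r$ for every $a\in A$. I would then define $k\colon A\to DA$ by
\[
 k(a):=\begin{cases} \frac{1}{p_a}\sum_{r:\,e(r)=a}\xi_r\,r, & a\in\supp(p),\\ \delta_a, & a\notin\supp(p), \end{cases}
\]
the first line being a convex combination formed in $DA$, whose weights sum to $1$ by the displayed identity. It then remains to verify the two defining properties. For the dilation condition on $a\in\supp(p)$: writing $k(a)=E\bigl(\sum_{r:\,e(r)=a}(\xi_r/p_a)\,\delta_r\bigr)$ and using $e\circ E=e\circ De$ together with $e\circ\delta=\id_A$, the expression collapses to $e(\delta_a)=a$. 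For $E\circ Dk(p)=q$: since $Dk(p)=\sum_a p_a\,\delta_{k(a)}$ we obtain $E\circ Dk(p)=\sum_a p_a\,k(a)=\sum_{a\in\supp(p)}\sum_{r:\,e(r)=a}\xi_r\,r$, and because any $r$ with $\xi_r>0$ has $p_{e(r)}\ge\xi_r>0$, hence $e(r)\in\supp(p)$, this double sum runs over all $r$ in the support of $\xi$ and equals $\sum_r\xi_r\,r=E\xi=q$.

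I do not anticipate a genuine obstacle; the argument is essentially bookkeeping. The points needing care are getting the orientation of ``partial decomposition'' right, so that one knows $E\xi=q$ and $De(\xi)=p$ (rather than the reverse); the division by $p_a$, which is harmless precisely because the points outside $\supp(p)$ carry zero $p$-weight and the value of $k$ there is irrelevant; and the final reindexing that recaptures every $r$ in the support of $\xi$. The one conceptual ingredient is the algebra law $e\circ E=e\circ De$, which is exactly what forces the convex combination $k(a)$ to have barycentre $a$.
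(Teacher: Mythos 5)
Your proof is correct and follows essentially the same route as the paper: the dilation is constructed by conditioning the witness $\xi$ along the barycentre map $e$ (with arbitrary values off $\supp(p)$), and the converse uses the witness $Dk(p)$, with the same orientation convention $E\xi=q$, $De(\xi)=p$. One small point in your favour: in the forward direction you verify $E\circ Dk(p)=\sum_a p_a\,k(a)=E\xi=q$ directly, whereas the paper asserts the stronger intermediate identity $Dk(p)=\xi$, which can fail when distinct distributions in the support of $\xi$ share the same barycentre, so your bookkeeping is the more accurate version of the same argument.
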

\begin{proof}
 \begin{itemize}
  \item $\ref{pevcond}\Rightarrow\ref{dilcond}$: Let $r\in DDA$ be a partial decomposition of $p$ into $q$, i.e.~$De(r)=p$ and $E(r)=q$. Construct a map $k:A\to DA$ as follows.
  For each $s\in DA$ which lies in the support of $r$, note that $e(s)\in A$ lies in the support of $p=De(r)$. 
  Now for each $a$ in the support of $p$, define $k(a)\in DA$ by ``conditioning'', i.e.
  $$
  k(a)(b) \coloneqq \dfrac{1}{p(a)} \sum_{s\in e^{-1}(a)} r(s) \,s(b) 
  $$
  for all $b\in A$.
  The value of $k$ outside the support of $p$ can be arbitrary, and we get a $p$-dilation. Moreover, 
  $$
  Dk(p)(s) = \sum_a p(a)\, k(a) = r(s) ,
  $$
  so that $Dk(p)=r$, and so $E\circ Dk(p)=q$.
  \item $\ref{dilcond}\Rightarrow\ref{pevcond}$: Define $r\coloneqq Dk(p)$. We have that $E(r)=q$ by hypothesis. Moreover, for each $a$ in the support of $p$, $e(k(a))=a$, so that 
  $$
  De(Dk(p)) (a) = \sum_a p(a)\,e(k(a)) = p(a),
  $$
  hence $De(r)=p$. Therefore, $r$ is a partial decomposition of $q$ into $p$.
 \end{itemize}
\end{proof}

We have then again the following result for the continuous case \cite[Lemma~4.2.17]{thesis}.

\begin{thm}
 In the category of complete metric spaces, let $(A,e)$ be a $P$-algebra, and let $p,q\in PA$. The following conditions are equivalent:
 \begin{enumerate}
  \item There exists a partial decomposition of $p$ into $q$;
  \item There exists a $p$-dilation $k$ such that $E\circ k_* p=q$.
 \end{enumerate}
\end{thm}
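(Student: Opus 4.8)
The plan is to follow the proof of the discrete proposition above, replacing the elementary conditioning formula by the disintegration theorem (existence of regular conditional probabilities) and verifying the resulting measure-theoretic identities with the monad and algebra laws of $P$. As in the discrete case, the substantive implication is $(a)\Rightarrow(b)$, the converse being essentially formal.

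For $(a)\Rightarrow(b)$, suppose $r\in PPA$ witnesses a partial decomposition of $p$ into $q$, i.e.~$Pe(r)=p$ and $E(r)=q$, where $e:PA\to A$ is the barycenter (structure) map of the algebra. The key step is to disintegrate $r$ along $e$ with respect to $p=Pe(r)=e_*r$: this produces a $p$-measurable family $(r_a)_{a\in A}$ of elements of $PPA$ with $r=\int_A r_a\,dp(a)$ and with $r_a$ concentrated on $e^{-1}(a)$ for $p$-almost every $a$. One then defines $k:A\to PA$ by $k(a):=E(r_a)$. Since $r_a$ lives on $e^{-1}(a)$ we get $Pe(r_a)=e_*r_a=\delta_a$ for $p$-almost every $a$, so by the algebra laws
\[
 e(k(a)) \;=\; e(E(r_a)) \;=\; e(Pe(r_a)) \;=\; e(\delta_a) \;=\; a ,
\]
which shows that $k$ is a $p$-dilation. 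Finally, $E$ commutes with $p$-mixtures (an instance of the associativity law of $P$, via $k=E\circ(a\mapsto r_a)$), so that
\[
 E(k_*p) \;=\; \int_A k(a)\,dp(a) \;=\; \int_A E(r_a)\,dp(a) \;=\; E\Bigl( \int_A r_a\,dp(a) \Bigr) \;=\; E(r) \;=\; q ,
\]
where the fourth equality uses the disintegration $r=\int_A r_a\,dp(a)$; spelled out, $\bigl(E(k_*p)\bigr)(A') = \int_A\!\int_{PA} s(A')\,dr_a(s)\,dp(a) = \bigl(E(r)\bigr)(A')$ for every measurable $A'\subseteq A$.

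For $(b)\Rightarrow(a)$, given a $p$-dilation $k$ with $E(k_*p)=q$, set $r:=k_*p=(Pk)(p)\in PPA$. Then $E(r)=q$ by hypothesis, while $Pe(r)=P(e\circ k)(p)=(e\circ k)_*p=p$ since $e\circ k$ agrees with $\id_A$ off a $p$-null set; hence $r$ witnesses a partial decomposition of $p$ into $q$. The main obstacle is the disintegration step in $(a)\Rightarrow(b)$: one must justify the existence of a \emph{proper} disintegration of $r$ along $e$ even though $PA$ need not be separable — the point being that $r$, being Radon, is carried by a separable, indeed $\sigma$-compact, subset of $PA$ — and one must propagate the Radon and finite-first-moment conditions through the construction so that each $r_a$ genuinely lies in $PPA$, each $k(a)$ in $PA$, and $a\mapsto k(a)$ is measurable. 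These technical points are carried out in detail in \cite[Lemma~4.2.17]{thesis}; the rest is bookkeeping with the monad and algebra structure of $P$.
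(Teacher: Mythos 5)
The paper gives no in-text proof of this theorem---it simply cites \cite[Lemma~4.2.17]{thesis}---and your argument is exactly the expected continuous analogue of the paper's proof of the discrete $D$-monad proposition: disintegration of $r$ along $e:PA\to A$ replaces the explicit conditioning formula, $k(a):=E(r_a)$ replaces the finite weighted average, and your algebra-law computations ($e\circ E=e\circ Pe$, $e_*r_a=\delta_a$, $e(\delta_a)=a$) together with the converse via $r:=k_*p$ are correct. Like the paper, you delegate the genuinely technical content (existence of a proper disintegration of a Radon measure on the possibly non-separable space $PA$, and propagation of the Radon, finite-first-moment, and measurability requirements) to the thesis, so your treatment matches the paper's; the only caveat is that citing \cite[Lemma~4.2.17]{thesis} for those steps would be circular if a self-contained proof were the goal.
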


We have gained an extra interpretation: in the context of probability, a partial decomposition of $p$ into $q$ is a process of ``adding noise'', or ``letting diffusion take place''. Conversely, we can then also interpret partial evaluations as ``removing noise''.

In probability theory there exists already a concept that intuitively is a ``partial expectation'', namely, \emph{conditional expectation} of random variables. It turns out that the two concepts are in some sense equivalent, at least in the case of the Kantorovich monad.

\begin{deph}\label{defce}
 Consider a probability space $(X,\mathcal{F},\mu)$, a sub-$\sigma$-algebra $\mathcal{G}$ of $\mathcal{F}$, and measurable mappings $f,g:X\to A$ such that $f_*\mu$ and $g_*\mu$ have finite first moment.
 We say that $g$ is a \emph{conditional expectation of $f$ given $\mathcal{G}$} if:
 \begin{itemize}
  \item The function $g$ is also $\mathcal{G}$-measurable;
  \item For every $G$ in the $\sigma$-algebra $\mathcal{G}$, we have
  \begin{equation*}
   \int_G g \, d\mu = \int_G f \, d\mu .
  \end{equation*}
 \end{itemize}
\end{deph}

For brevity, we extend the terminology to the image measures themselves:
\begin{deph}
 Let $p,q\in PA$. We call a \emph{conditional expectation} of $p$ into $q$ \emph{in distribution} a probability space $(X,\mathcal{F},\mu)$ together with a sub-$\sigma$-algebra $\mathcal{G}$ of $\mathcal{F}$, and mappings $f,g:X\to A$, with $f$ $\mathcal{F}$-measurable and $g$ $\mathcal{G}$-measurable, such that $p=f_*\mu$, $q=g_*\mu$, and $g$ is a conditional expectation of $f$ given $\mathcal{G}$. 
\end{deph}

Here is now the main result~\cite[Theorem~4.2.14]{thesis}:

\begin{thm}\label{eqce}
 Let $(A,e)$ be a $P$-algebra, and let $p,q\in PA$. The following conditions are equivalent:
 \begin{enumerate}
  \item There exists a partial evaluation of $p$ into $q$;
  \item There exists a conditional expectation of $p$ into $q$ in distribution.
 \end{enumerate}
\end{thm}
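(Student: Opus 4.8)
The plan is to reduce the theorem to the dilation characterization proved just above and then match dilations with conditionings by two short, essentially dual, constructions. By Definition~\ref{defpev}, a partial evaluation of $p$ into $q$ is the same thing as a partial decomposition of $q$ into $p$; hence, applying the preceding theorem with the roles of $p$ and $q$ interchanged, such a partial evaluation exists if and only if there is a $q$-dilation $k\colon A\to PA$ — so that $e(k(a))=a$ for $q$-almost all $a$ — with $E(k_*q)=p$. It therefore suffices to see that such $q$-dilations correspond exactly to witnesses of conditional expectations of $p$ into $q$ in distribution. Throughout, $e(\rho)$ is the barycenter $\int_A a\,d\rho(a)$, and all measures in sight, being Radon on complete metric spaces, are concentrated on separable (hence Polish) subspaces, so regular conditional probabilities along Borel maps exist.

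From a dilation to a conditioning: given a $q$-dilation $k$, let $X\coloneqq A\times A$ with its Borel $\sigma$-algebra $\mathcal{F}$, let $\mu\in P(A\times A)$ be the law of ``draw $a\sim q$, then $b\sim k(a)$'', that is $\mu\coloneqq\int_A(\delta_a\otimes k(a))\,dq(a)$, and set $g\coloneqq\pi_1$, $f\coloneqq\pi_2$, $\mathcal{G}\coloneqq\sigma(\pi_1)$. Then $g_*\mu=q$, while $f_*\mu=\int_A k(a)\,dq(a)=E(k_*q)=p$, and $g$ is $\mathcal{G}$-measurable by construction; moreover, for every Borel $B\subseteq A$, integrating out $b\sim k(a)$ and using $e(k(a))=a$ for $q$-almost all $a$ gives
\[
 \int_{\pi_1^{-1}(B)}f\,d\mu \;=\; \int_A\mathbbm{1}_B(a)\,e(k(a))\,dq(a) \;=\; \int_A\mathbbm{1}_B(a)\,a\,dq(a) \;=\; \int_{\pi_1^{-1}(B)}g\,d\mu .
\]
Since every element of $\mathcal{G}$ has the form $\pi_1^{-1}(B)$, this means $g=\E[f\mid\mathcal{G}]$, so $(X,\mathcal{F},\mu,\mathcal{G},f,g)$ is a conditional expectation of $p$ into $q$ in distribution.

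From a conditioning to a dilation: starting from a conditioning datum $(X,\mathcal{F},\mu,\mathcal{G},f,g)$, pass to the joint law $\nu\coloneqq(g,f)_*\mu\in P(A\times A)$. Since $g$ is $\mathcal{G}$-measurable and $g=\E[f\mid\mathcal{G}]$, the tower property gives $\E[f\mid\sigma(g)]=g$, which translates into $\E_\nu[\pi_2\mid\sigma(\pi_1)]=\pi_1$. Disintegrating $\nu$ over its first marginal $q=\pi_{1*}\nu=g_*\mu$ yields $\nu=\int_A(\delta_a\otimes\lambda_a)\,dq(a)$ with $a\mapsto\lambda_a\colon A\to PA$ Borel; a Fubini estimate from $p=\pi_{2*}\nu\in PA$ shows $\lambda_a\in PA$ for $q$-almost all $a$, while $\E_\nu[\pi_2\mid\sigma(\pi_1)]=\pi_1$ forces the barycenter $e(\lambda_a)$ to equal $a$ for $q$-almost all $a$. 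Hence $k\coloneqq(a\mapsto\lambda_a)$ is a $q$-dilation with $E(k_*q)=\int_A\lambda_a\,dq(a)=\pi_{2*}\nu=p$, so by the preceding theorem there is a partial evaluation of $p$ into $q$. (Explicitly, its witness is $(\lambda_\bullet)_*q\in PPA$; that this is a genuine element of $PPA$ — Radon and of finite first moment, the latter because $\int_A d_{PA}(\lambda_a,\delta_{b_0})\,dq(a)=\int_A d(b,b_0)\,dp(b)<\infty$ for a fixed $b_0\in A$ — is where Lusin's theorem and finiteness of the first moment of $p$ are used.)

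I expect the main obstacle to lie in the second construction, namely the existence and regularity of the disintegration $a\mapsto\lambda_a$ and the check that it assembles into an honest element of $PPA$ (finite first moment $q$-almost everywhere, and tightness of the pushforward); these are exactly the points where the separability and tightness facts recalled above are needed. The full details, along with the elementary parallel statements for the distribution monad, are worked out in \cite[Section~4.2.1]{thesis}.
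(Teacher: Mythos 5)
Your proof is essentially correct, and it follows the same route as the source the paper defers to: the paper itself gives no proof of Theorem~\ref{eqce}, citing \cite[Theorem~4.2.14]{thesis}, and the thesis develops exactly this chain --- partial evaluation of $p$ into $q$ $=$ partial decomposition of $q$ into $p$ $\leftrightarrow$ existence of a $q$-dilation $k$ with $E(k_*q)=p$ $\leftrightarrow$ existence of a conditional expectation in distribution --- via the joint-law and disintegration constructions you describe. Your reduction to the dilation theorem is applied with the roles of $p$ and $q$ correctly interchanged, and both transfer arguments (dilation $\to$ conditioning via $\mu=\int_A(\delta_a\otimes k(a))\,dq(a)$, conditioning $\to$ dilation via the regular conditional probability of $(g,f)_*\mu$ over its first marginal) check out, with the measure-theoretic prerequisites (Radonness of the joint law from tightness of the marginals, existence of the disintegration on a Polish subspace, finite first moments via Fubini, a.e.\ identification of Bochner integrals) correctly identified rather than glossed over. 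The only point worth making explicit is that the dilation produced by disintegration is merely a measurable map, so your argument implicitly reads ``map'' in the definition of a $p$-dilation as ``Borel map with $k_*p\in PPA$''; this is the intended reading (as the notation $E\circ k_*p$ in the cited theorem indicates), so there is no gap.
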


So, in particular, the law of total evaluation of Proposition~\ref{totalev} corresponds to the well-known law of total expectation of random variables.

This does not mean, however, that whenever there is a partial evaluation of $p$ into $q$, their associated \emph{random variables} are in relationship of conditional expectation: we are only looking at the distributions, and we are not even taking them to be jointly distributed: while it can be shown that both a partial evaluation of $p$ into $q$ and a conditional expectation in distribution do determine a joint distribution, the mere \emph{existence} of either kind of structure does not. In other words, the theorem does not give an equivalence of \emph{structures} (partial evaluations and conditional expectations), but merely an equivalence of \emph{properties} of admitting those structures. The question of whether the equivalence can be strengthened to an equivalence of structures, up to suitable isomorphism, is currently still open.

Yet again an additional interpretation can be given, connecting to the notion of \emph{second-order stochastic dominance}. 
We can interpret a concave function as a ``risk-averse observer'', as it is customary in mathematical finance~\cite{risk,fishburn}. The reason is that the integral of a convex function is higher if the integration measure is more ``concentrated''.  

\begin{deph}
 Let $(A,e)$ be a $P$-algebra, and let $p,q\in PA$. We say that $p\le q$ in the \emph{second-order stochastic dominance relation} if and only if for every concave function $f:A\to\R$, 
 $$
 \int f\, dp \;\le\; \int f\, dq .
 $$
\end{deph}

We have that this order is again equivalent to the partial evaluation order \cite[Theorem~4.4.9]{thesis}:

\begin{thm}
 Let $(A,e)$ be a $P$-algebra, and let $p,q\in PA$. We have that $p\le q$ in second-order stochastic dominance if and only if there is a partial evaluation from $p$ to $q$.
\end{thm}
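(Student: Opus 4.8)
The plan is to prove the two implications separately, using the characterization of partial evaluations in terms of conditional expectation in distribution (Theorem~\ref{eqce}) as an intermediary whenever it simplifies matters, though a direct argument is also available.

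For the direction ``partial evaluation $\Rightarrow$ second-order stochastic dominance'', I would start from a partial evaluation $k : S \to PPA$ witnessing $p \to q$, so that $E(k) = p$ and $(Pe)(k) = q$ (working with $S = 1$). Concretely this exhibits $q$ as the law of $a \mapsto e(m_a)$, where $k$ assigns to (a generic point, via the element of $PPA$) a distribution $m_a \in PA$ with $E(m_a)$ appearing in the support of the intermediate structure, and $p$ is recovered by integrating out. The key analytic input is \textbf{Jensen's inequality}: for a concave $f : A \to \R$ and a probability measure $m \in PA$ with barycenter $e(m) \in A$, one has $f(e(m)) \ge \int f\,dm$, since $e$ is exactly the barycenter map for the $P$-algebra structure. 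Integrating this inequality against the outer measure gives $\int f\,dq = \int f(e(m))\,(\dots) \ge \int\!\!\int f\,dm\,(\dots) = \int f\,dp$, which is precisely $p \le q$ in second-order stochastic dominance. I would need to be slightly careful that $f$ need only be defined and concave on $A$, a closed convex subset of a Banach space, and that finite first moments guarantee the relevant integrals are well defined and that Jensen applies in this infinite-dimensional setting; the Hahn--Banach separation argument underlying Jensen for barycenters on closed convex sets handles this.

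For the converse, ``second-order stochastic dominance $\Rightarrow$ partial evaluation'', I expect to invoke Theorem~\ref{eqce} and instead produce a conditional expectation of $p$ into $q$ in distribution. This is the classical direction of the Cartier--Fell--Meyer / Strassen-type theorem: if $\int f\,dp \le \int f\,dq$ for all concave (equivalently, reversing signs, $\int f\,dp \ge \int f\,dq$ for all convex) $f$, then there is a \emph{dilation} (a Markov kernel with barycenter the identity) transporting $q$ to $p$, equivalently a martingale coupling whose two marginals are $q$ and $p$. Given such a dilation $k : A \to PA$ with $e(k(a)) = a$ for $q$-almost all $a$ and $E(k_* q) = p$, one reads off directly a partial decomposition of $p$ into $q$ — this is exactly the content of the dilation--partial-decomposition equivalence (Theorem in the ``Partial expectations, dilations, conditional expectations'' subsection), only with the roles arranged so that it becomes a partial evaluation of $p$ into $q$. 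So the real work is citing and applying the Choquet-theoretic duality theorem correctly in the Kantorovich setting; this has been done in~\cite[Chapter~4]{thesis} and~\cite{winkler}.

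The main obstacle is the converse direction, and specifically the passage from the dominance inequality to an actual dilation kernel. On a set (the distribution monad $D$) this is a finite linear-programming duality and is elementary, but for the Kantorovich monad one is asserting the existence of a martingale coupling between two measures on a closed convex subset of a (possibly infinite-dimensional) Banach space, given only that one integrates every concave function to at most the other. Establishing this requires the measurable-selection / Choquet-order machinery (existence of dilations representing the Choquet order, plus a disintegration to get an honest kernel $k : A \to PA$), together with checking that the resulting kernel has finite first moments so that it lands in $PA$ and that the $P$-almost-everywhere barycenter condition is the correct measurability hypothesis for a $p$-dilation. I would structure the writeup so that this step is delegated to the cited results, keeping the new content to the (easy) Jensen direction and the bookkeeping that identifies dilations/conditional expectations with partial evaluations.
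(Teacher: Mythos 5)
Your proposal is correct and follows essentially the route the paper intends: the paper itself gives no proof of this theorem, deferring to~\cite[Theorem~4.4.9]{thesis} (and the Winkler-style Choquet-order machinery), and your split into an easy Jensen/barycenter direction plus the Cartier--Fell--Meyer-type existence of a dilation, transported through the dilation--partial-evaluation correspondence, is exactly that argument, with the directional bookkeeping ($q$-dilation with $E(k_*q)=p$ yielding a partial evaluation of $p$ into $q$) handled correctly apart from one harmless terminological slip (``partial decomposition of $p$ into $q$'' should read ``of $q$ into $p$'', as you yourself then note).
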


Therefore, the partial evaluation order for probability distributions on Banach spaces encodes in a categorical way exactly also the notion of randomness that is used in mathematical finance, native to that field.

\section*{Acknowledgements}
\addcontentsline{toc}{section}{Acknowledgements}

This paper was originally written for the Applied Category Theory 2019 school. We thank the participants of our and other groups for their interest, useful feedback and fruitful collaboration. We also thank Dirk Hofmann, Joachim Kock, Steve Lack, Rostislav Matveev, Paige Randall North, Sharwin Rezagholi, David Spivak, and Tarmo Uustalu for the very fruitful discussions and the helpful advice. Most of this paper was written while both authors were with the Max Planck Institute for Mathematics in the Sciences, which we thank for providing an outstanding research environment.

%\cleardoublepage
\bibliographystyle{alpha}
%\nocite{*}
\bibliography{catprob}
\addcontentsline{toc}{section}{\refname}

\end{document}